\newtheorem{theorem}{Theorem}
\newtheorem{proposition}[theorem]{Proposition}
\newtheorem{lemma}[theorem]{Lemma}
\newtheorem{definition}{Definition}
\newtheorem{problem}{Problem}
\newtheorem{remark}{Remark}
\renewcommand{\epsilon}{\varepsilon}
\DeclareMathOperator{\pap}{\textbf{p}}
\renewcommand{\epsilon}{\varepsilon}
\def\A{\mathbb{A}}
\begin{document}

\sloppy


\begin{frontmatter}

\title{Abelian AntiPowers in Infinite Words}

\author[label1]{Gabriele Fici\corref{cor1}}
\ead{gabriele.fici@unipa.it}

\author[label2]{Mickael Postic}
\ead{postic@math.univ-lyon1.fr}

\author[label3]{Manuel Silva}
\ead{mnas@fct.unl.pt}

\address[label1]{Dipartimento di Matematica e Informatica, Universit\`a di Palermo, Palermo, Italy}

\address[label2]{Institut Camille Jordan, Universit\'e Claude Bernard Lyon 1,  Lyon, France}

\address[label3]{Faculdade de Ci\^encias e Tecnologia, Universidade Nova de Lisboa, Lisbon, Portugal}

\cortext[cor1]{Corresponding author.}

\journal{Advances in Applied Mathematics}

\begin{abstract}
An abelian antipower of order $k$ (or simply an abelian $k$-antipower) is a concatenation of $k$ consecutive words of the same length having  pairwise distinct Parikh vectors. This definition  generalizes to the abelian setting the notion of a $k$-antipower, as introduced in [G. Fici et al., {\em antipowers in infinite words}, J.~Comb.~Theory,~Ser.~A, 2018], that is a concatenation of $k$ pairwise distinct words of the same length. 
We aim to study whether a word contains abelian $k$-antipowers for arbitrarily large $k$. \v{S}.~Holub proved that all paperfolding words contain  abelian powers of every order [{\em Abelian powers in paperfolding words}. J.~Comb.~Theory, Ser.~A, 2013].  We show that they also contain  abelian antipowers of every order. 
\end{abstract}

\begin{keyword}
Abelian antipower; $k$-antipower; abelian complexity; paperfolding word; Sierpi\`nski word.
\end{keyword}

\end{frontmatter}

\section{Introduction}

Many of the classical definitions in combinatorics on words (e.g., period, power, factor complexity, etc.) have a counterpart in the abelian setting, though they may not enjoy the same properties.

Recall that the Parikh vector $P(w)$ of a word $w$ over a finite ordered alphabet $\A=\{a_1,a_2,\ldots,a_{|\A|}\}$ is the vector whose $i$-th component is equal to the number of occurrences of the letter $a_i$ in $w$, $1\leq i \leq |\A|$. For example, the Parikh vector of $w=abbca$ over $\A=\{a,b,c\}$ is $P(w)=(2,2,1)$. This notion is at the basis of the abelian combinatorics on words, where two words are considered equivalent if and only if they have the same Parikh vector. 

For example, the classical notion of factor complexity (the function that counts the number of distinct factors of length $n$ of a word, for every $n$) can be generalized by considering the so-called abelian factor complexity (or abelian complexity for short), that is the function that counts the number of distinct Parikh vectors of factors of length $n$, for every $n$.

Morse and Hedlund~\cite{MoHe} proved that an infinite word is aperiodic if and only if its factor complexity is unbounded. This characterization does not have an analogue in the case of the abelian complexity, as there exist aperiodic words with bounded abelian complexity. For example, the well-known Thue-Morse word has abelian complexity bounded by $3$, yet it is aperiodic.

Richomme et al.~\cite{minsub}  proved that if a word has bounded abelian complexity, then it contains abelian powers of every order --- an abelian power of order $k$ is a concatenation of $k$ words having the same Parikh vector. 
However, this is not a characterization of words with bounded abelian complexity. 
Indeed, \v{S}t\v{e}p\'{a}n Holub~\cite{Holub} proved that all paperfolding words contain abelian powers of every order, and paperfolding words have unbounded abelian complexity (a property that by the way follows from the main result of this paper). The class of paperfolding words therefore constitutes an interesting example, as they are uniformly recurrent (every factor appears infinitely often and with bounded gaps) aperiodic words with linear factor complexity.

In a recent paper \cite{icalp}, the first and the third author, together with Antonio Restivo and Luca Zamboni, introduced the notion of an antipower. An {\em antipower of order $k$}, or simply a \emph{$k$-antipower}, is a concatenation of $k$ consecutive pairwise distinct words of the same length.  E.g., $aabaaabbbaba$ is a $4$--antipower.

In \cite{icalp}, it is proved that the existence of powers of every order or antipowers of every order is an unavoidable regularity for infinite words:

\begin{theorem}\label{unav}\cite{icalp}
Every infinite word contains powers of every order or antipowers of every order.
\end{theorem}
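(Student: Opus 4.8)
The plan is to establish the dichotomy in its contrapositive form: I assume the second alternative fails, so that there is a fixed integer $K$ for which $w$ contains no $K$-antipower, and I aim to prove that $w$ then contains $k$-powers for every $k$ (if instead the second alternative holds there is nothing to prove). The point of this reduction is that the hypothesis ``no $K$-antipower'' is a uniform local-repetitiveness statement: for every block length $\ell$ and every starting position $p$, among the $K$ consecutive length-$\ell$ blocks $w[p \dd p+\ell),\, w[p+\ell \dd p+2\ell),\,\ldots,\, w[p+(K-1)\ell \dd p+K\ell)$ at least two coincide. First I would reformulate this scale by scale: for each $\ell$, reading $w$ as the infinite sequence of aligned blocks $b^{\ell}_0 b^{\ell}_1 b^{\ell}_2\cdots$ with $b^{\ell}_i = w[i\ell \dd (i+1)\ell)$ over the (finite) alphabet $\A^{\ell}$, the hypothesis says that this sequence has no $K$ consecutive pairwise-distinct terms, while the goal ``$k$-power'' becomes ``$k$ consecutive equal terms at some scale''.

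The natural tool to try on each fixed scale is Ramsey's theorem (or mere pigeonhole) applied to the pairs of block-indices coloured by equality of the corresponding blocks, and the main line of attack is then to let the scale grow and exploit the fact that the block decompositions at different scales are consistent, since the scale-$m\ell$ blocks are exactly the length-$m$ groupings of the scale-$\ell$ blocks. The mechanism I expect to drive the proof is this tension: a block sequence that is forbidden to have $K$ consecutive distinct terms is strongly repetitive, and as $\ell$ (hence the coarseness $m\ell$) increases, this repetitiveness must crystallise either into genuine periodicity or into arbitrarily long runs of a single block at suitable coarse scales; either outcome produces powers of unbounded order. On the ``equal'' side I would convert the many coincidences forced by the hypothesis into overlapping local periods and promote them, via the theorem of Fine and Wilf, to honest periodicity on long intervals, which is precisely what yields high powers.

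The hard part will be reconciling the unstructured information one gets from Ramsey-type extraction with the rigid arithmetic-progression geometry intrinsic to powers and antipowers: the $k$ blocks of a $k$-power or $k$-antipower must be consecutive and share a single length $\ell$, so their boundaries form an arithmetic progression of common difference exactly $\ell$ (length equals spacing). A monochromatic set only compares blocks along irregular gaps, and a single fixed scale is genuinely insufficient, as the periodic pattern $c_0 c_1 c_2 c_0 c_1 c_2 \cdots$ over $\A^{\ell}$ has neither two consecutive equal blocks nor three consecutive distinct blocks. Moreover, since $\A^{\ell}$ has only $|\A|^{\ell}$ elements, antipowers of large order force large $\ell$, so the relevant structure can never be read off from one fixed-scale monochromatic set and the argument must genuinely diagonalise over scales. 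I therefore expect the decisive and most delicate step to be the scale-refinement: showing that the absence of $K$-antipowers at \emph{all} scales, together with the compatibility of the block decompositions, funnels into arbitrarily long runs of equal consecutive blocks at coarse scales. Making this refinement cohere with the Ramsey/Fine--Wilf machinery, so that the extracted structure really is a progression of equal, consecutive, equal-length blocks, is where the genuine work lies.
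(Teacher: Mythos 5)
First, a point of comparison: the paper does not prove this statement at all --- Theorem~\ref{unav} is quoted from \cite{icalp}, and the authors explicitly omit even the statement of the stronger result proved there. So there is no in-paper proof to measure your attempt against; it has to stand on its own.

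It does not: what you have written is a plan, not a proof. You correctly set up the contrapositive (fix $K$ such that $w$ has no $K$-antipower and derive $k$-powers for all $k$), and correctly observe that the hypothesis forces, for every block length $m$, a coincidence among any $K$ consecutive length-$m$ blocks; but you then explicitly defer the step that does all the work (``the scale-refinement \ldots is where the genuine work lies''), and that deferred step is precisely the theorem. The missing idea is concrete, and it is neither Ramsey nor Fine--Wilf but elementary counting across scales: for each $m$, applying the no-$K$-antipower hypothesis to the prefix of length $Km$ yields a pair $1\le i<j\le K$ with the $i$-th and $j$-th blocks equal, i.e.\ $w[t]=w[t+(j-i)m]$ for all $t$ in an interval of length $m$; since only $\binom{K}{2}$ pairs $(i,j)$ are available, some fixed pair recurs for a positive proportion of all block lengths up to any bound, hence for two block lengths $m<m'$ that are close together relative to $m$. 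Composing the two coincidences on the large overlap of their domains produces a local period $(j-i)(m'-m)$ that is short compared to the length of the interval on which it holds, and this ratio can be driven below $1/\ell$ for any prescribed $\ell$, which is what actually manufactures $\ell$-powers. Without this quantitative pigeonhole-and-composition argument your outline never touches the ground, and your single-scale framing (one fixed $\ell$, Ramsey on pairs of block indices) cannot be repaired into it, as you yourself essentially observe. (A minor slip: your illustrative counterexample is misstated --- if $c_0,c_1,c_2$ are pairwise distinct then $c_0c_1c_2$ \emph{is} three consecutive pairwise distinct blocks; the pattern you want is $c_0c_1c_0c_1\cdots$.)
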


Note that in the previous statement there is no hypothesis on the alphabet size. 

Actually, in~\cite{icalp} a stronger result is proved (of which we omit the statement here for the sake of simplicity) from which it follows that every aperiodic uniformly recurrent word must contain antipowers of every order.

In this paper, we extend the notion of an antipower to the abelian setting. 

\begin{definition}
 An {\em abelian antipower of order $k$}, or simply an \emph{abelian $k$-antipower}, is a concatenation of $k$ consecutive words of the same length having pairwise distinct Parikh vectors.  
\end{definition}

For example, $aabaaabbbabb$ is an abelian $4$--antipower. Notice that an abelian $k$-antipower is a $k$-antipower but the converse does not necessarily hold (which is dual to the fact that a $k$--power is an abelian $k$--power but the converse does not necessarily hold).

We think that an analogue of Theorem \ref{unav} may still hold in the case of abelian antipowers, but unfortunately the proof of Theorem~\ref{unav} does not generalize to the abelian setting.

\begin{problem}\label{conj:ab}
Does every infinite word contain abelian powers of every order or abelian antipowers of every order?
\end{problem}

Clearly, if a word has bounded abelian complexity, then it cannot contain abelian antipowers of every order. However, a word can avoid large abelian antipowers even if its abelian complexity is unbounded. Indeed, in \cite{icalp}, an example is shown of an aperiodic recurrent  word  avoiding  $6$-antipowers (and therefore avoiding abelian $6$-antipowers), and from the construction it can be easily verified that the abelian complexity of this word is unbounded. 

A similar situation can be illustrated with the well-known Sierpi\`nski word. Recall that the Sierpi\`nski word (also known as Cantor word) $s$ is the fixed point starting with $a$ of the substitution
\begin{align*}
\sigma:\ & a \rightarrow aba  \\
& b \rightarrow bbb
\end{align*}
so that the word $s$ begins as follows:
\[ababbbababbbbbbbbbababbbabab^{27}a\cdots\]

Therefore, $s$ can be obtained as the limit, for $n\to \infty$, of the sequence of words $(s_n)_{n\geq 0}$ defined by: $s_0=a$,  $s_{n+1}=s_nb^{3^n}s_n$ for $n\geq 1$. Notice that for every $n$ one has $|s_n|=3^n$.

We show that the abelian complexity of $s$ is unbounded.

\begin{theorem}\label{thm:S}
The Sierpi\`nski word $s$ does not contain $11$--antipowers, hence it does not contain abelian $11$--antipowers.
\end{theorem}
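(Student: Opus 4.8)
The plan is to prove that the Sierpiński word $s$ contains no $11$-antipower. Since an abelian $11$-antipower is in particular an $11$-antipower, this immediately gives the second assertion. The core idea is structural: the self-similar construction $s_{n+1}=s_n b^{3^n} s_n$ means that $s$ is built from nested copies of itself separated by long runs of $b$'s, and any sufficiently long block of consecutive equal-length factors must, for combinatorial reasons, contain many repeated blocks — hence cannot be an antipower.

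Let me think about how to do this concretely. Suppose we have an $11$-antipower $u_1 u_2 \cdots u_{11}$, where each $u_i$ has the same length $\ell$ and all the $u_i$ are pairwise distinct. I want to derive a contradiction. The first step is to locate this antipower inside the hierarchical structure of $s$. Let me think about the block lengths. The word $s_n$ has length $3^n$, and between consecutive copies of $s_n$ in $s_{n+1}$ sits a run $b^{3^n}$. The key observation I would establish is that $s$ contains arbitrarily long runs of $b$'s (the run $b^{3^n}$ appears for every $n$), and more importantly that these runs carve $s$ into a hierarchy. So let me fix $n$ large enough that the antipower's total length $11\ell$ is comparable to $3^n$, and analyze where the antipower falls relative to the big $b$-runs at various scales. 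The most promising route is to consider the longest run of $b$'s overlapping the antipower region; any block $u_i$ that lies entirely inside a run $b^{m}$ with $m\ge \ell$ equals $b^\ell$, and two such blocks are equal — so at most one block can be of this form. This constrains how the antipower can sit relative to a long $b$-run.

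The heart of the argument, then, is a counting/pigeonhole analysis. I would try to show that the eleven consecutive length-$\ell$ blocks, read across the hierarchy, force two of them to have the same content. Here is the intended mechanism: the positions of letters $a$ in $s$ are very sparse and structured — each $a$ in $s_{n+1}$ either comes from an $a$ in the left copy of $s_n$ or the right copy, and the $a$-positions form a self-similar pattern. By examining, at the appropriate scale, how the $a$'s distribute among the eleven consecutive blocks of length $\ell$, I would argue that there must be (at least) two blocks that are identical — either two all-$b$ blocks, or two blocks that are translates of each other within a single copy of some $s_m$ by the self-similar matching of $a$-positions. A case analysis on how the window of length $11\ell$ straddles the $b$-runs at the relevant scale should reduce to finitely many configurations, and the number $11$ is presumably the smallest value for which this pigeonhole closes in every configuration (the word $s$ does contain $10$-antipowers, which is why the bound is exactly $11$).

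The main obstacle I anticipate is the bookkeeping in the case analysis: one must carefully track, for each way the window of eleven blocks can be positioned relative to the nested $b^{3^m}$ runs, how many blocks fall strictly inside a $b$-run, how many straddle a boundary, and how the non-trivial (i.e., containing some $a$) blocks are forced to repeat. Getting the constant to be exactly $11$ — rather than some larger, easier bound — requires an optimal pigeonhole at each scale, and the delicate point will be ruling out configurations where all eleven blocks manage to be distinct. I expect the argument to hinge on showing that whenever a block contains at least one $a$, the self-similar rigidity of the $a$-pattern (together with the constraint that all blocks have equal length $\ell$) pins down its content up to very few possibilities, so that eleven distinct blocks cannot coexist. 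An induction on $n$ (the scale of $s_n$ containing the antipower), with the base cases checked by direct inspection of short prefixes of $s$, would tie the whole thing together.
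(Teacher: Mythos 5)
Your proposal correctly identifies the two structural ingredients the paper also uses: locating the putative $11$-antipower relative to the nested runs $b^{3^n}$, and the observation that at most one block $u_i$ can equal $b^m$ (since the blocks are pairwise distinct). But the argument stops at the point where the actual contradiction has to be produced, and the mechanism you gesture at for producing it is not the one that works. You propose to show that ``the self-similar rigidity of the $a$-pattern'' forces two blocks to be \emph{identical translates} inside some copy of $s_m$, reducing to ``finitely many configurations'' closed by an induction with inspected base cases. None of this is carried out, and it is also unnecessary: the paper never matches $a$-positions between blocks at all. Its contradiction is purely arithmetic. Taking $n$ minimal so that the antipower occurs in $s_{n+1}b^{3^{n+1}}$ but not in $s_nb^{3^n}$, the fact that (all but at most one of) the blocks fit inside $s_{n+1}=s_nb^{3^n}s_n$ gives an upper bound such as $10m<3^{n+1}$, i.e.\ $m<3^{n-1}$; while minimality of $n$ forces the window to straddle the central run $b^{3^n}$ (or, one level down, $b^{3^{n-1}}$), and since at most one block is $b^m$, that run must be covered by at most two or three consecutive blocks, giving a lower bound such as $2m>3^n$ or $3m>3^n$. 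These two bounds are incompatible, and the whole proof is a short case analysis on whether the all-$b$ block (if any) is $u_1$, $u_{11}$, or interior. This quantitative tension between ``the antipower is short enough to fit in $s_{n+1}$'' and ``the antipower is long enough to swallow the run $b^{3^n}$ with few blocks'' is the missing idea in your write-up; without it, your claim that eleven distinct blocks cannot coexist remains unsupported (and your side claim that $s$ contains $10$-antipowers, used to motivate the sharpness of $11$, is asserted without evidence and is not needed for the statement).
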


An infinite word can contain both abelian powers of every order and abelian antipowers of every order. This is the case, for example, of any word with full factor complexity. However, finding a class of uniformly recurrent words with linear factor complexity satisfying this property seems a more difficult task. Indeed, most of the well-known examples (Thue-Morse, Sturmian words, etc.) have bounded abelian complexity, hence they cannot contain abelian antipowers of every order --- whereas, by the aforementioned result of Richomme et al.~\cite{minsub}, they contain abelian powers of every order.
Building upon the framework that \v{S}t\v{e}p\'{a}n Holub developed to prove that all paperfolding words contain abelian powers of every order \cite{Holub}, we prove in the next section that all paperfolding words contain also abelian antipowers of every order.

\section{Sierpi\`nski Word}

Blanchet-Sadri, Fox and Rampersad \cite{BFR} characterized the asymptotic behavior of the abelian complexity of words that are fixed points of a morphism.
In the following proposition, we give the precise bounds of the abelian complexity of the Sierpi\`nski word.

\begin{proposition}
The abelian complexity $a(n)$ of the Sierpi\`nski word verifies $a(n)=\Theta(n^{\log_3 2 })$.
\end{proposition}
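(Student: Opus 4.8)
The plan is to exploit the binary alphabet together with the self-similar block structure induced by $\sigma$. Since $\A=\{a,b\}$, the Parikh vector of a length-$n$ factor $w$ is $(|w|_a,\,n-|w|_a)$, so it is determined by $|w|_a$ alone, and $a(n)$ equals the number of distinct values taken by $|w|_a$ as $w$ ranges over factors of length $n$. First I would observe that this set of values is a full interval of integers: sliding a length-$n$ window one position to the right deletes one letter and appends another, so $|w|_a$ changes by at most $1$, and a discrete intermediate-value argument shows that every value between the minimum and the maximum is attained. Because $s_{m+1}=s_m b^{3^m}s_m$ contains a run of $b$'s of length $3^m$ for every $m$ (each copy of $s_m$ begins and ends with $a$), there are runs of $b$'s of every length, so for each $n$ some length-$n$ window lies inside such a run and the minimum value is $0$. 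Writing $M(n)=\max_{|w|=n}|w|_a$, it therefore suffices to prove $a(n)=M(n)+1=\Theta(n^{\log_3 2})$, i.e.\ $M(n)=\Theta(n^{\log_3 2})$.

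For the lower bound I would use that $s_k$ is a prefix, hence a factor, of $s$ with $|s_k|=3^k$ and $|s_k|_a=2^k$, so $M(3^k)\ge 2^k=(3^k)^{\log_3 2}$. Since $M$ is nondecreasing (any optimal length-$n$ factor extends to the right inside the right-infinite word $s$ without losing occurrences of $a$), taking $k=\lfloor\log_3 n\rfloor$ and using $3^k\le n$ gives $M(n)\ge 2^k\ge \tfrac12\,n^{\log_3 2}$.

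The core of the argument, and the step I expect to be the main obstacle, is the matching upper bound, which I would derive from a self-similar recursion. Since $s=\sigma(s)$, the word $s$ factors into consecutive blocks of length $3$, the $i$-th block being $aba$ if $s[i]=a$ and $bbb$ if $s[i]=b$; a block thus carries $2$ occurrences of $a$ exactly when it comes from an $a$, and $0$ otherwise. A window of length $3m$ meets at most two partial blocks and a contiguous run of at most $m$ full blocks, and the number of full $a$-blocks it contains equals the number of $a$'s in the corresponding factor of $s$ of length at most $m$ at the coarser scale, hence is at most $M(m)$. Counting $2$ occurrences per full $a$-block and at most $2$ occurrences per partial block yields the recursion $M(3m)\le 2M(m)+4$. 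Unrolling it from $M(1)=1$ gives $M(3^k)\le 5\cdot 2^k-4$, and monotonicity transfers the bound to arbitrary $n$ via $n\le 3^{\lceil\log_3 n\rceil}<3n$, so that $M(n)=O(n^{\log_3 2})$.

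Combining the two estimates yields $M(n)=\Theta(n^{\log_3 2})$ and hence $a(n)=\Theta(n^{\log_3 2})$. The delicate points to get right are the exact bookkeeping of the partial blocks in the recursion, so that the additive constant stays genuinely bounded, and the verification that the block-counting correctly matches full $a$-blocks with occurrences of $a$ in the coarser word. Alternatively, one could invoke the general asymptotic classification of abelian complexities of morphic fixed points of Blanchet-Sadri, Fox and Rampersad~\cite{BFR}, but the direct self-similar computation has the advantage of producing explicit constants.
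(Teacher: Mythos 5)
Your proof is correct, but it reaches the bound by a genuinely different route from the paper. Both arguments begin the same way: on a binary alphabet $a(n)$ is the number of distinct values of $|w|_a$ over length-$n$ factors, and since $s$ contains arbitrarily long runs of $b$'s this set of values is the full interval from $0$ up to the maximum $M(n)$ (you make the sliding-window intermediate-value step explicit; the paper leaves it implicit). The divergence is in how $M(n)$ is controlled. The paper asserts that $s$ is \emph{prefix normal} with respect to $a$ --- no factor of length $n$ has more $a$'s than the prefix of length $n$ --- so that $M(n)$ is exactly the number of $a$'s in the prefix, which is then read off at the extremal lengths $n=3^k$ and $n=2\cdot 3^k$ (both giving $2^k$), yielding the sharp two-sided bound $n^{\log_3 2}/2^{\log_3 2}\leq a(n)\leq n^{\log_3 2}$ up to an additive constant. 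You instead bound $M(n)$ from above via the self-similar recursion $M(3m)\leq 2M(m)+4$ coming from $s=\sigma(s)$, and from below by exhibiting $s_k$ as a factor with $2^k$ occurrences of $a$ among $3^k$ letters. Your bookkeeping is right: an unaligned window of length $3m$ meets two partial blocks and $m-1$ full ones, an aligned one meets $m$ full blocks, so $2M(m)+4$ covers both cases, and unrolling gives $M(3^k)\leq 5\cdot 2^k-4$. What each approach buys: the paper's argument is shorter and gives essentially tight constants, but it rests on the prefix normality of $s$, which is stated with a pointer to the definition rather than proved; your recursion is entirely self-contained and elementary, at the price of a worse multiplicative constant (about $10$ in the upper bound versus the paper's $1$). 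Both are valid proofs of the $\Theta(n^{\log_3 2})$ statement.
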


\begin{proof}
The Sierpi\`nski word $s$ is prefix normal with respect to the letter $a$ (see~\cite{FL11,PN17} for the definition of prefix normal word), that is, for each length $n$, no factor of $s$ of length $n$ contains more occurrences of the letter $a$ than the prefix of length $n$. Since $s$ contains arbitrarily long blocks of $b$s, the number of distinct Parikh vectors of factors of $s$ of a given length $n$ is given by $1$ plus the number of $a$s in the prefix of length $n$. It is easy to see that the values of $n$ for which the proportion of $a$'s is maximal in a prefix of length $n$ are of the form $n=3^k$, while those for which the proportion of $a$'s is minimal are of the form $n=2\cdot 3^k$, and in both cases the prefix of length $n$ contains $2^k$ $a$s. With a standard algebraic manipulation, this gives
\[n^{\log_3 2 }/2^{\log_3 2 }\leq a(n)\leq n^{\log_3 2 }.\]
\end{proof}

{\bf Proof of Theorem \ref{thm:S}}.
Suppose that $s$ contains an $11$--antipower $u=u_1u_2\cdots u_{11}$, of length $11m$.
Let us then consider the first occurrence of $u$ in $s$. Let $n$ be the smallest integer such that $u$ occurs in $s_{n+1}b^{3^{n+1}}$  but not in $s_{n}b^{3^{n}}$. 

Let us first suppose that no $u_i$ is equal to $b^m$ for some $i$. Then $u_1 \cdots u_{10}$ is a factor of $s_{n+1}=s_nb^{3^n}s_n$, so $10m < 3^{n+1}$ hence $m<3^{n-1}$. Then, by minimality of $n$, there are only two possible cases: either $u_1$ starts before the block $b^{3^n}$, or $u_1$ starts in the block $b^{3^n}$ and ends in $s_n$.

In the first case, by minimality of $n$, $u$ ends after the block $b^{3^n}$, and since no $u_i$ equals $b^m$, we get $2m>3^n$, which is in contradiction with $m<3^{n-1}$.

If $u_1$ starts in the block $b^{3^n}$ and ends in $s_n$, $u_2 \cdots u_{10}$ is a factor of $s_n=s_{n-1}b^{3^{n-1}}s_{n-1}$ and so $9m < 3^n$ hence $m<3^{n-2}$. By minimality of $n$, $u_{11}$ ends after the block $b^{3^{n-1}}$. Again, since no $u_i$ equals $b^m$, we get $2m>3^{n-1}$, which is in contradiction with $m<3^{n-2}$.

Let us then suppose that $u_{11}=b^m$, so that $u_1\cdots u_{9}$ is a factor of $s_{n+1}$. The same reasoning as before holds, since $(9m < 3^{n+1}) \Rightarrow (m < 3^{n-1})$ and $(8m < 3^n) \Rightarrow (2m < 3^{n-1})$. If $u_1=b^m$, $u_2\cdots u_{10}$ is a factor of $s_n$ with no $u_i=b^m$ and we can again apply the same reasoning.

Finally, suppose that $u_i=b^m$ with $i \neq 1$ and $i \neq 11$. Hence, $u_1 \cdots u_{10}$ is a factor of $s_{n+1}=s_nb^{3^n}s_n$, and $10m < 3^{n+1}$. If $u_1$ starts before the block $b^{3^n}$ (and $u$ ends after by minimality of $n$), we get $3m > 3^n$ since otherwise $u$ would contain two blocks $b^m$, and this contradicts $10m < 3^{n+1}$. If  $u_1$ does not start before the block $b^{3^n}$, then by minimality of $n$ it starts in this block, so $u_2\cdots u_{10}$ is a factor of $s_n=s_{n-1}b^{3^{n-1}}s_{n-1}$ which ends after the block $b^{3^{n-1}}$, again by minimality of $n$. This shows that $9m < 3^n$, and at the same time $3m>3^{n-1}$, which produces a contradiction.
\qed

\section{Paperfolding Words}

In what follows, we recall the combinatorial framework for dealing with paperfolding words introduced in~\cite{Holub}, although we use the alphabet $\{0,1\}$ instead of $\{1,-1\}$.

A paperfolding word is the sequence of ridges and valleys obtained
by unfolding a sheet of paper which has been folded infinitely many times. At each step, one can fold the paper in two different ways, thus generating
uncountably many sequences. It is known that all the paperfolding words are uniformly recurrent and have the same factor complexity $c(n)$, and that $c(n)=4n$ for $n\geq 7$ \cite{All}. 
Madill and Rampersad~\cite{MaRa} studied the abelian complexity of the regular paperfolding word and proved that  it is a $2$-regular sequence.
The regular paperfolding word \[\pap=00100110001101100010011100110110\cdots\] is the paperfolding word obtained by folding at each step in the same way. It can be defined as a Toeplitz word (see \cite{CaKa97} for a definition of Toeplitz words) as follows: Consider the infinite periodic word $\gamma=(0?1?)^{\omega}$, defined over the alphabet $\{0,1\}\cup\{?\}$. Then define $p_0=\gamma$ and, for every $n>0$, $p_n$ as the word obtained from $p_{n-1}$ by replacing the symbols $?$ with the letters of $\gamma$. So, 
\begin{align*}
 p_0 &=  0 ? 1 ? 0 ? 1 ? 0 ? 1 ? 0 ? 1 ? 0 ? 1 ? 0 ? 1 ? 0 ? 1 ?\cdots,\\
 p_1 &=  0 0 1 ? 0 1 1 ? 0 0 1 ? 0 1 1 ? 0 0 1 ? 0 1 1 ? 0 0 1 ?\cdots,\\
 p_2 &=  0 0 1 0 0 1 1 ? 0 0 1 1 0 1 1 ? 0 0 1 0 0 1 1 ? 0 0 1 1 \cdots,\\
 p_3 &=  0 0 1 0 0 1 1 0 0 0 1 1 0 1 1 ? 0 0 1 0 0 1 1 1 0 0 1 1 \cdots,
\end{align*}
 etc. Thus, $\pap=\lim_{n\to\infty}p_n$, and hence $\pap$ does not contain occurrences of the symbol $?$.

More generally, one can define a paperfolding word $\textbf{f}$ by considering the two infinite periodic words $\gamma=(0?1?)^{\omega}$ and $\bar{\gamma}=(1?0?)^{\omega}$. Then, let $\textbf{\textit{b}}=b_0b_1\cdots$ be an infinite word over $\{-1,1\}$, called {\em the sequence of instructions}.  Define $(\gamma_n)_{n\geq 0}$ where, for every $n$, $\gamma_n=\gamma$ if $b_n=1$ or $\gamma_n=\bar{\gamma}$ if $b_n=-1$. The paperfolding word $\textbf{f}$ {\em associated with} $\textbf{\textit{b}}$ is the limit of the sequence of words $f_n$ defined by  $f_0=\gamma_0$ and, for every $n>0$, $f_n$ is obtained from $f_{n-1}$ by replacing the symbols $?$ with the letters of $\gamma_n$.

Recall that every positive integer $i$ can be uniquely written as $i=2^k(2j+1)$, where 
$k$ is called the \emph{order} of $i$ (a.k.a.~the $2$-adic valuation of $i$), and $(2j+1)$ is called the \emph{odd part} of $i$. One can verify that the previous definition of $\textbf{f}$ is equivalent to the following:
for every $i=1,2,\ldots$ define
$ w_i=(-1)^jb_k$, where $i=2^k(2j+1).$
Then $f_i=0$ if $w_i=1$ and $f_i=1$ if $w_i=-1$. This is equivalent to 
\[f_i=1 \quad \text{iff} \quad i \equiv 2^k(2+b_k) \mod{2^{k+2}}.\]

\begin{remark}
The regular paperfolding word corresponds to the sequence of instructions $\textbf{\textit{b}}=1^{\omega}$.
\end{remark}

\begin{definition}
Let $\textbf{f}$ be a paperfolding word. An occurrence of a letter in $\textbf{f}$ at position $i$ is said to be \emph{of order $k$} if the letter at position $i$ is $?$ in $f_{k-1}$ and different from $?$  in $f_{k}$. We consider the letters occurring in $f_0$ as of order $0$. 
\end{definition}

Hence, in a paperfolding word $\textbf{f}$ associated with the sequence $\textbf{\textit{b}}=b_0b_1\cdots$, the $1$'s of order $0$ appear at positions $2+b_0+4t$, $t\geq 0$, the $1$'s of order $1$ appear at positions $2(2+b_1+4t)$, $t\geq 0$, and, in general, the $1$'s of order $k$ appear at positions $2^k(2+b_k+4t)$, $t\geq 0$.

Let $\textbf{f}=f_1f_2\cdots$ be a paperfolding word associated with the sequence $\textbf{\textit{b}}=b_0b_1\cdots$. A factor of $\textbf{f}$ of length $n$ starting at position $\ell+1$, denoted by $\textbf{f}[\ell+1,\ldots,\ell+n]$, contains a number of  $1$'s that is given by the sum, for all $k\geq 0$, of the  $1$'s of order $k$ in the interval $[\ell+1,\ell+n]$. For each $k$, since the  $1$'s of order $k$ are at distance $2^{k+2}$ one from another, the number of occurrences of $1$'s of order $k$ in $\textbf{f}[\ell+1,\ldots,\ell+n]$ is given by
\[\left\lfloor  \dfrac{n-\ell}{2^{k+2}} \right\rfloor + \epsilon_{k,b_k}(\ell,n),\]
where $\epsilon_{k,b_k}(\ell,n)\in\{0,1\}$ depends on the sequence $\textbf{\textit{b}}$ (in fact, $b_k$ determines the positions of the occurrences of the $1$'s of order $k$ in $\textbf{f}$). 
We set \[\Delta(\ell,n)=\sum_{k\geq 0}\epsilon_{k,b_k}(\ell,n)\] the number of ``extra'' $1$'s in $\textbf{f}[\ell+1,\ldots,\ell+n]$. 

For example, in the prefix $\pap[1,14]$ of length $14$ of the regular paperfolding word, we know that there are  at least $3=\lfloor\frac{14}{4}\rfloor$ $1$'s of order $0$, $1=\lfloor\frac{14}{8}\rfloor$ of order $1$ and $0=\lfloor\frac{14}{16}\rfloor$ of order $2$. In the interval $[1,14]$ there are three $1$'s of order $0$ (at positions $3$, $7$ and $11$), two $1$'s of order $1$ (at positions $6$ and $14$), and one $1$ of order $2$ (at position $12$), so  we have in $\pap[1,14]$ no extra $1$ of order $0$, i.e., $\epsilon_{0,1}(0,14)=0$, one extra $1$ of order $1$, i.e., $\epsilon_{1,1}(0,14)=1$ and one extra $1$ of order $2$, i.e., $\epsilon_{2,1}(0,14)=1$, so that $\Delta(0,14)=2$. 

We set
\[\mathcal{E}_{k,b_k}(\ell,d,m)=\left(\epsilon_{k,b_k}(\ell,\ell+d),\ldots,\epsilon_{k,b_k}(\ell+(m-1)d,\ell+md)\right)\]
and 
\[\Delta(\ell,d,m)=\sum_{k\geq 0}\mathcal{E}_{k,b_k}(\ell,d,m)=\left(\Delta(\ell,\ell+d),\ldots,\Delta(\ell+(m-1)d,\ell+md)\right).\]

The factor of $\textbf{f}$ of length $dm$ starting at position $\ell+1$ is an abelian $m$-power if and only if the components of the vector $\Delta(\ell,d,m)$ are all equal, while it is an abelian $m$-antipower if and only if the components of the vector $\Delta(\ell,d,m)$ are pairwise distinct.

The next result (Lemma 4 of \cite{Holub}) will be the fundamental ingredient for the construction of abelian antipowers in paperfolding words.

\begin{lemma}[Additivity Lemma]\label{al}
Let $\ell,\ell'\geq 0$ and $m,d,d'\geq 1$ be integers with $\ell'$
 and $d'$ both even. Let $r$ be such that $2^r>\ell+md$, and for each $k\geq 0$ the following implication holds: if $\mathcal{E}_{k,1}(\ell',d',m) \neq \mathcal{E}_{k,-1}(\ell',d',m)$ then $b_k=b_{k+r}$.

Then \[\Delta(\ell,d,m) + \Delta(\ell',d',m)=\Delta(\ell + 2^r\ell', d+2^rd',m).\]   
 \end{lemma}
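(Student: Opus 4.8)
The plan is to prove the vector identity componentwise and, within each of the $m$ blocks, to organize the count of $1$'s by order and exploit a clean separation of scales at the threshold $2^r$. Fix $j\in\{0,\ldots,m-1\}$ and write $a_j=\ell+jd$, $a'_j=\ell'+jd'$, $\tilde\ell=\ell+2^r\ell'$, $\tilde d=d+2^rd'$. Since $\tilde\ell+j\tilde d=a_j+2^ra'_j$, the $j$-th window on the right-hand side is $(X_j,Y_j]$ with $X_j=a_j+2^ra'_j$, $Y_j=a_{j+1}+2^ra'_{j+1}$, of length $Y_j-X_j=d+2^rd'$. For each order $k$ I would count the $1$'s of order $k$ in $(X_j,Y_j]$, i.e.\ the integers $p\equiv 2^k(2+b_k)\pmod{2^{k+2}}$, subtract the baseline $\lfloor (Y_j-X_j)/2^{k+2}\rfloor$ to isolate the extra bit $\epsilon$, and then sum over $k$.

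The core of the argument is that orders split exactly along $k\le r-1$ and $k\ge r$. For low orders $k\le r-1$ I would use that $2^ra'_j$ and $2^rd'$ are multiples of $2^{k+2}$: this is immediate when $k\le r-2$, while for the boundary order $k=r-1$ it is exactly here that the hypotheses ``$\ell'$ and $d'$ even'' are needed, since then $a'_j=\ell'+jd'$ and $d'$ are even, so $2^ra'_j$ and $2^rd'$ are multiples of $2^{r+1}=2^{k+2}$. Because the residues mod $2^{k+2}$ are periodic, the multiple $2^rd'$ contributes only full periods and the shift $2^ra'_j$ on both endpoints leaves the extra count invariant; hence the order-$k$ extra in $(X_j,Y_j]$ equals $\epsilon_{k,b_k}(a_j,a_{j+1})$, the order-$k$ extra of the first problem. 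Summing over $k\le r-1$ recovers $\Delta(a_j,a_{j+1})$, using that $2^r>\ell+md\ge a_{j+1}$ forces every $1$ of order $\ge r$ to lie beyond the first window and thus contribute nothing there.

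For high orders $k=r+k''$ with $k''\ge 0$, I would observe that every such position is a multiple of $2^r$, write $p=2^rq$, and check that ``$p\in(X_j,Y_j]$ and $p$ of order $r+k''$'' is equivalent to ``$q\in(a'_j,a'_{j+1}]$ and $q\equiv 2^{k''}(2+b_{r+k''})\pmod{2^{k''+2}}$''. The verification that the integer solutions $q$ fill out exactly $(a'_j,a'_{j+1}]$ rests on $a_j,a_{j+1}<2^r$, so that the fractional shifts $a_j/2^r,a_{j+1}/2^r\in[0,1)$ do not move the integer endpoints; a short floor computation then collapses the baseline $\lfloor (d+2^rd')/2^{r+k''+2}\rfloor$ to $\lfloor d'/2^{k''+2}\rfloor$. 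Hence the order-$(r+k'')$ extra in $(X_j,Y_j]$ equals $\epsilon_{k'',b_{r+k''}}(a'_j,a'_{j+1})$: the order-$k''$ extra of the second problem, but read off the shifted instruction $b_{r+k''}$ rather than $b_{k''}$.

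The hard part, and the reason for the last hypothesis, is reconciling this instruction shift: I expect the main obstacle to be forcing $\epsilon_{k'',b_{r+k''}}(a'_j,a'_{j+1})=\epsilon_{k'',b_{k''}}(a'_j,a'_{j+1})$ for all $j$ at once. When $\mathcal{E}_{k'',1}(\ell',d',m)=\mathcal{E}_{k'',-1}(\ell',d',m)$ the extra bits are insensitive to the instruction, so the two sides agree regardless; when these vectors differ, the hypothesis grants $b_{k''}=b_{k''+r}$, making the instructions literally equal. Either way the order-$(r+k'')$ contributions sum to $\Delta(a'_j,a'_{j+1})$, the $j$-th component of the second problem. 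Combining the two ranges shows the $j$-th component of the left-hand side equals $\Delta(a_j,a_{j+1})+\Delta(a'_j,a'_{j+1})$, the $j$-th component of the right-hand side; as $j$ was arbitrary, the vector identity follows.
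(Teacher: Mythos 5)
The paper does not prove this lemma at all: it imports it verbatim as Lemma 4 of Holub's paper \cite{Holub}, so there is no internal proof to compare yours against. Judged on its own, your argument is correct and complete, and it is the natural (and, as far as I can tell, essentially Holub's) proof. The componentwise reduction is right, since the $j$-th window of the combined problem is exactly $(a_j+2^ra'_j,\,a_{j+1}+2^ra'_{j+1}]$. Your scale separation at $k=r$ is the key point and you place both hypotheses exactly where they are needed: for $k\le r-2$ divisibility of $2^ra'_j$ and $2^rd'$ by $2^{k+2}$ is automatic, at the boundary order $k=r-1$ it is precisely the evenness of $\ell'$ and $d'$ that gives divisibility by $2^{r+1}$, and the condition $2^r>\ell+md$ both kills all orders $\ge r$ in the first problem and makes the fractional shifts $a_j/2^r$ harmless in the change of variable $p=2^rq$ for the high orders. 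The residue-class count $\lfloor (Y-c)/2^{k+2}\rfloor-\lfloor (X-c)/2^{k+2}\rfloor$ and the floor identity $\lfloor (d'+d/2^r)/2^{k''+2}\rfloor=\lfloor d'/2^{k''+2}\rfloor$ do exactly what you claim. Finally, your case split on whether $\mathcal{E}_{k'',1}(\ell',d',m)=\mathcal{E}_{k'',-1}(\ell',d',m)$ is the correct reading of the instruction-matching hypothesis: either the extra vector is insensitive to the instruction, or $b_{k''}=b_{k''+r}$ forces literal agreement. I have no corrections.
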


Using the Additivity Lemma, Holub \cite{Holub} proved that all paperfolding words contain abelian powers of every order. We will use the Additivity Lemma to prove that all paperfolding words contain abelian antipowers of every order. We start with the regular paperfolding word, then we extend the argument to all paperfolding words.

\subsection{Regular paperfolding word}

Let 
\[\begin{array}{ccccc}
\Phi & : & \lbrace0,1\rbrace^2 & \to & \lbrace x,y,z \rbrace \\
 & & 00 & \mapsto & x \\
 & & 01 & \mapsto & y \\
 & & 10 & \mapsto & y \\
 & & 11 & \mapsto & z \\
\end{array}\]
be the morphism that identifies words of length $2$ over the alphabet $\{0,1\}$ that are abelian equivalent. We have the following lemma:

\begin{lemma}\label{lem:1}
Let $n \geq 3$ be an integer. Let $p=\pap[\ell+1,\ldots,\ell+2^n]=u_1v_1\cdots u_{2^{n-1}}v_{2^{n-1}}$ be a factor of $\pap$ of length $2^n$.
Then, no $q<2^{n-1}$ exists such that 

\begin{align}\label{eq:1}
\Phi(p)=
\Phi(u_1v_1)\cdots\Phi(u_{2^{n-1}}v_{2^{n-1}})=\Phi(u_{q+1}v_{q+1})\cdots\Phi(u_{2^{n-1}}v_{2^{n-1}})\Phi(u_1v_1)\cdots\Phi(u_qv_q).
\end{align}
\end{lemma}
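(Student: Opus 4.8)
The plan is to prove the equivalent statement that $\Phi(p)$ is \emph{primitive}, which is exactly what the lemma asserts. First I would record the standard equivalence: a word $C$ of length $N$ coincides with its cyclic rotation by $q$, for some $1\le q<N$, if and only if $C$ is a proper integer power, namely $C=t^{N/d}$ with $d=\gcd(N,q)$ and hence $|t|=d\le N/2$. Thus, assuming for contradiction that \eqref{eq:1} holds for some $q<2^{n-1}$, the word $C:=\Phi(p)$, which has length $N=2^{n-1}$, is cyclically $q$-periodic, and my goal becomes to exhibit a forbidden repetition inside $\pap$.

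The key structural input is the self-similarity of the regular paperfolding word: writing $\pap_t$ for its $t$-th letter, one checks from the definition that $\pap_{2t}=\pap_t$ for all $t\ge1$, while the odd-indexed letters $\pap_1\pap_3\pap_5\cdots$ form the alternating word $0101\cdots$. Consequently, for the pair $u_iv_i$ sitting at positions $\ell+2i-1,\ell+2i$, the parity of $\ell$ decides the picture: exactly one of the two interleaved sequences $(u_i)_{i}$ and $(v_i)_{i}$ is a contiguous factor $w=w_1\cdots w_N$ of $\pap$ of length $N$, and the other is a period-$2$ alternating sequence $a=a_1\cdots a_N$ over $\{0,1\}$. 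Identifying $x,y,z$ with the number of $1$'s, i.e.\ with $0,1,2$, we have $\Phi(u_iv_i)=u_i+v_i$, so $C=\Phi(p)$ is the termwise integer sum $C_i=a_i+w_i$ of the alternating sequence $a$ and the paperfolding factor $w$.

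Now assume $C$ is cyclically $q$-periodic with $1\le q<N=2^{n-1}$ and split on the parity of $q$. If $q$ is even, then, because $a$ has period $2$ and $N$ is even, $a$ is itself cyclically $q$-periodic; subtracting, $w$ is cyclically $q$-periodic as well, so by the first paragraph $w=t^{N/d}$ with $d=\gcd(N,q)=2^{s}$, and $q$ even forces $s\ge1$. Since $N/d\ge2$, the factor $w$ begins with $tt$, so $\pap$ contains a square $tt$ with $|t|=2^{s}\ge2$ \emph{even}. If instead $q$ is odd, then $a_{i+q}=1-a_i$ for all $i$, and comparing $a_i+w_i=a_{i+q}+w_{i+q}$ letter by letter (all values lie in $\{0,1\}$) forces $w_i=1-a_i$ for every $i$; hence $w$ is an alternating factor of $\pap$ of length $N\ge4$, and therefore contains $0101$ or $1010$, again a square of even half-length.

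In either case the assumption produces a square factor of $\pap$ whose half-length is even, contradicting the repetition structure of the regular paperfolding word: every square factor of $\pap$ has odd half-length (in fact half-length $1$, $3$ or $5$), a classical fact that can also be re-derived by feeding the factor $w$ back into the even/odd decomposition above. This contradiction shows that no such $q$ exists, proving the lemma. I expect the main obstacle to be the even-$q$ case: one must check that the alternating part $a$ can be cancelled cyclically --- this is where it matters that $N=2^{n-1}$ is even, so that $a$ remains cyclically $q$-periodic and the cyclic period passes intact to $w$ --- and then invoke even-half-length square avoidance (which needs $N\ge4$, i.e.\ $n\ge3$, to locate the forbidden factor). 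Getting the parity bookkeeping right for both values of $\ell\bmod 2$ is the part most prone to slips.
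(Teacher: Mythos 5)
Your proof is correct, and it takes a genuinely different route from the paper's. Both arguments begin the same way: the cyclic-rotation identity forces $\Phi(p)$ to be a proper power of a word whose length $d$ divides $2^{n-1}$ (the paper gets this from the Lyndon--Sch\"utzenberger commutation argument applied to the smallest solution $q'$, you from the $\gcd$ description of rotation-invariant words), and both exploit the interleaving structure in which one of $(u_i)$, $(v_i)$ is the alternating order-$0$ sequence while the other is again a factor of $\pap$. From there the paper stays inside the factor $p$: for $2\le q'\le 2^{n-2}$ it uses that letters of order $j$ at distance $2^{j+1}$ must differ, and for $q'=1$ it runs an explicit case analysis on order-$0$ and order-$1$ letters. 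You instead funnel both parities of $q$ into a single external fact, namely that $\pap$ contains no square $tt$ with $|t|$ even: for $q$ even the power structure of $w=t^{N/d}$ exhibits such a square directly, and for $q$ odd the arithmetic $w_i=1-a_i$ forces $w$ to be alternating, which contains $0101$ or $1010$. This is cleaner and eliminates the paper's somewhat fiddly $q'=1$ case analysis, at the cost of invoking the square-avoidance property (due to Allouche); your remark that it can be re-derived by feeding $w$ back into the even/odd decomposition is accurate --- the descent shows $2^k$ divides the half-length for every $k$ with $2^{k+1}$ at most the half-length, which is impossible --- but you should write that descent out, since the paper does not supply the citation. One further trade-off worth noting: the paper's formulation in terms of letter orders is the one that generalizes to Lemma~\ref{lem:2} and to arbitrary paperfolding words, whereas your use of the specific self-similarity $\pap_{2t}=\pap_t$ and of the square spectrum of the \emph{regular} paperfolding word would need to be reworked for that extension.
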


\begin{proof}
First, notice that if $q'$ is the smallest solution of (\ref{eq:1}), then $q'|2^{n-1}$. Indeed, writing $w_i=\Phi(u_iv_i)$, we have 
\begin{align*}
 w_1\cdots w_{2^{n-1}}& =  w_1\cdots w_{q'}w_{q'+1}\cdots w_{2^{n-1}} \\
 & =w_{q'+1}\cdots w_{2^{n-1}}w_1\cdots w_{q'},
\end{align*}
and since two words commute if and only if they are powers of the same word, there exists a word $z$ and positive integers $s$ and $t$ such that
\begin{align*}
w_1\cdots w_{q'}=z^s \text{ and } w_{q'+1}\cdots w_{2^{n-1}}=z^t.
\end{align*}
This gives $|z|\cdot(s+t)=2^{n-1}$ and $|z|\cdot s=q'$.
By the minimality of $q'$, we have that $s=1$ and so $|z|=q'$ divides $2^{n-1}$. Thus, $q'=2^j$ for some integer $j <n$.

By the Toeplitz construction of $\pap$, we immediately have that 
\[u_1v_1\cdots u_{2^{n-1}}v_{2^{n-1}}=av_1\overline{a}v_2av_3\overline{a}\cdots \overline{a} v_{2^{n-1}} \]
or
\[u_1v_1\cdots u_{2^{n-1}}v_{2^{n-1}}=u_1au_2\overline{a} u_3au_4\overline{a}\cdots u_{2^{n-1}}\overline{a} \]
with $a\in \lbrace 0,1 \rbrace $ and $\overline{a}=1-a$. 

Suppose $q' \neq 1$ and $q'\neq 2^{n-1}$. Since $q'$ is even, we have that $\Phi(u_iv_i)=\Phi(u_{i+q'}v_{i+q'})$ implies $u_iv_i=u_{i+q'}v_{i+q'}$.
But this cannot be the case, since two consecutive letters of order $j$ occur in $\pap$ at distance $2^{j+1}$. Since $j \leq n-2$, we have $2^{j+2} \leq 2^n$, so the factor $p$ contains at least two consecutive letters of order $j$. Suppose that the first of such letters is $u_i$; then $u_{i+q'}$ is at distance $2q'=2^{j+1}$, so $u_{i+q'} \neq u_{i}$, against the hypothesis that $q'$ is a solution of (\ref{eq:1}).

Thus, we must have $q'=1$ or $q'=2^{n-1}$. Since $n \geq 3$, $\pap[\ell+1,\ldots,\ell+2^n]$ contains two consecutive letters of order $1$. Let us first suppose that $v_i$ is a $1$ of order $1$, $u_i$ is a 1 of order 0 and $v_{i+2}$ is a $0$ of order $1$. Then, 
$\Phi(u_i v_i)=\Phi(11) \neq \Phi(10)=\Phi(u_{i+2}v_{i+2})$. The other cases would give $10 u_{i+1} v_{i+1}11$ with $v_i$ a $0$ of order 1 and $v_{i+2}$ a $1$ of order 1, $00 u_{i+1} v_{i+1}01$ and $00 u_{i+1} v_{i+1}01$ respectively in the case $u_i$ is a 0 of order 0. Similary, we get $10u_{i+1} v_{i+1} 00$ and $00 u_{i+1} v_{i+1} 10$ if $u_i$ is a $1$ of order 1 and $u_{i+2}$ a $0$ of order 1 or vice versa, and $v_i$ a 0 of order 0. The cases with $v_i$ a 1 of order 0 are symetric. Every case leads to $\Phi(u_i v_i) \neq \Phi(u_{i+2}v_{i+2})$.
This implies $q'\neq 1$ and so $q'=2^{n-1}$. By minimality of $q'$, the only solution of (\ref{eq:1}) is $q=2^{n-1}$.
\end{proof}

\begin{theorem}
The regular paperfolding word contains abelian $m$-antipowers for every $m\geq 2$.
\end{theorem}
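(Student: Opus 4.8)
The plan is to work entirely with the vector $\Delta(\ell,d,m)$ and to exploit that, for a binary alphabet, a factor of length $dm$ cut into $m$ blocks of length $d$ is an abelian $m$-antipower if and only if the $m$ blocks contain pairwise distinct numbers of $1$'s; since the floor terms $\lfloor d/2^{k+2}\rfloor$ are the same in every block, this is exactly the condition that the components of $\Delta(\ell,d,m)$ be pairwise distinct. The crucial simplification for the regular paperfolding word is that its sequence of instructions is $1^{\omega}$, so $b_k=b_{k+r}$ holds for all $k,r$; hence the hypothesis of the Additivity Lemma (Lemma~\ref{al}) collapses to the two bookkeeping conditions that $\ell'$ and $d'$ be even and that $2^r>\ell+md$. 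I would therefore treat the Additivity Lemma as an essentially unconstrained way to add $\Delta$-vectors of common length $m$.

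Next I would fix the scale $d=2$, where $\Delta(\ell,2,m)$ has a transparent meaning: since $\lfloor 2/2^{k+2}\rfloor=0$ for every $k$, its $i$-th component is simply the number of $1$'s in the $i$-th length-$2$ block, i.e.\ the value in $\{0,1,2\}$ recorded by $\Phi$. For even $\ell=2g$ this is the window $(Q_{g+1},\dots,Q_{g+m})$ of the fixed ternary sequence $Q$ obtained by reading $\Phi$ along $\pap$. Using Lemma~\ref{al} repeatedly (each summand has $d=2$ and an even offset, and $r$ is taken large enough at each step), I can realize as a single $\Delta(\ell^\ast,d^\ast,m)$ any finite nonnegative-integer combination $\sum_{s} c_s\,(Q_{g_s+1},\dots,Q_{g_s+m})$ of such windows. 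The key choice is the base-$3$ weighting $c_s=3^s$ for $s=0,\dots,S-1$: then the $i$-th component equals $\sum_{s}3^s Q_{g_s+i}$, a base-$3$ integer whose digits are the legitimate digits $Q_{g_s+i}\in\{0,1,2\}$. Consequently two components coincide precisely when the signatures $(Q_{g_0+i},\dots,Q_{g_{S-1}+i})$ and $(Q_{g_0+i'},\dots,Q_{g_{S-1}+i'})$ agree, so the whole problem reduces to choosing probes $g_0,\dots,g_{S-1}$ for which the signature map $i\mapsto(Q_{g_s+i})_{s}$ is injective on $\{1,\dots,m\}$.

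Finally I would extract such probes from Lemma~\ref{lem:1}. Taking $n$ with $2^{n-1}\ge m$ and the consecutive probes $g_s=s$, the signature of position $i$ is the length-$S$ window of $Q$ beginning at $i$, and Lemma~\ref{lem:1} — that the $\Phi$-image of a length-$2^n$ factor has all its rotations distinct — is exactly the aperiodicity input that forces these consecutive windows to be pairwise distinct, yielding the required injectivity. The step I expect to be the main obstacle is precisely this last transfer: Lemma~\ref{lem:1} is stated for cyclic rotations of a single length-$2^{n-1}$ block, whereas what I need is distinctness of ordinary linear consecutive windows of $Q$ across $m$ positions, so some care is needed either to pass to a block of length $2^{n+1}$ and read the linear windows inside it, or to superpose one further fixed $\Phi$-window to break any residual coincidence before applying the base-$3$ weighting. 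The remaining, purely routine, bookkeeping is to check that the evenness of offsets and the growth condition $2^r>\ell+md$ survive all $\sum_s 3^s$ additions, which is immediate since every added window uses $d=2$ with an even offset and the exponents $r$ may be chosen increasing.
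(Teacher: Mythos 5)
Your overall strategy coincides with the paper's own: fix $d=2$ so that the components of $\Delta(\ell,2,m)$ are the digits $Q_i\in\{0,1,2\}$ read off by $\Phi$ along aligned length-$2$ blocks, superpose shifted windows of $Q$ with rapidly growing coefficients via the Additivity Lemma (your base-$3$ weights $3^s$ are precisely the paper's ``increasing enough'' coefficients $\alpha_i$, since every digit is at most $2$), and thereby reduce the theorem to showing that $m$ consecutive length-$S$ windows of $Q$ are pairwise distinct. The bookkeeping you defer (even offsets, $d'=2$ even, $2^r$ large at each step, and $b_k=b_{k+r}$ holding trivially because $\textbf{\textit{b}}=1^{\omega}$) is indeed routine and matches the paper.

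The gap is exactly where you suspect it, and it is a genuine one. Lemma~\ref{lem:1} asserts that the $\Phi$-image of a length-$2^n$ factor is primitive, i.e.\ fixed by no nontrivial cyclic rotation; primitivity of a word does not imply that its internal factors of a given length are pairwise distinct (the word $00100$ is primitive yet contains $00$ twice), so the lemma is \emph{not} ``exactly the aperiodicity input'' you need. Neither of your proposed repairs closes this: passing to a block of length $2^{n+1}$ again yields only primitivity of the longer block, and ``superposing one further fixed $\Phi$-window'' is an idea, not an argument. The paper bridges this by an additional construction you are missing: it locates a factor $w1w'$ of $\pap$ of length $2^{k+2}-1$ whose central letter is a $1$ of order $k$, and then uses the translation property (\ref{eq:2}) of letters of each order to shift the occurrence (at most twice) so that $w=w'$ and every letter of order greater than $k$ in the factor is a $1$. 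Inside this factor the letter sequence is literally periodic with period $2^{k+1}$, so a coincidence of two linearly shifted windows of $Q$ becomes a genuine cyclic-rotation identity for the $\Phi$-image of the length-$2^{k+1}$ half, and only then does Lemma~\ref{lem:1} apply. Without such a periodic anchor (or an independent proof that $Q$ has no two equal length-$S$ factors starting among the first $m$ positions), the injectivity of your signature map --- and hence the whole conclusion --- is unproven.
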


\begin{proof}
The proof is mainly based on the Additivity Lemma. Let $m\geq 2$ be fixed. To prove the result it is sufficient to find a vector $\Delta(s,d,m)$ having pairwise distinct components. Let $k$ be an integer  such that $2^k\geq m$. Consider the first factor of length $2^{k+2}-1$ containing a $1$ of order $k$ in the middle; our factor is then of the form
\[ w1w'
\]
with $|w|=|w'|=2^{k+1}-1$. Since for every positive integers $i,k',s$, we have $ p_i \text{ of order } k' \Rightarrow  p_{i+2^{k'+s}} \text{ of order } k'$ and
\begin{equation*}
p_i \text{ of order } k' \Rightarrow  p_{i+2^{k'+2}}=p_i\neq p_{i+2^{k'+1}}
\end{equation*}
we get:
\begin{equation}\label{eq:2}
p_i \text{ of order } k' \Rightarrow  p_{i+2^{k'+2+s}}=p_i\neq p_{i+2^{k'+1}}
\end{equation} 

then, up to applying a translation, we can suppose $w=w'$. In fact, since $|w1|=2^{k+1}$, the equality is true for every letter of order smaller than $k$ by (\ref{eq:2}). Now, take the smallest order $r>k$ of a letter $0$ in $w$ or $w'$. It is the only letter of this order in our factor since two letters of order $r$ are distant of $2^{r+1}>|w1w'|$. If we consider the factor translated by $2^{r+1}$, by (\ref{eq:2}) the letters of order smaller than $r$ are the same and the letter we considered becomes a $1$. Since the length of $w1w'$ is $2^{k+2}-1$ and the distance between two letters of order higher than $k$ is at least $2^{k+1}$, the factor $w1w'$ contains exactly two letters of order higher than $k$. Hence, in at most $2$ steps we get $w1w$ with every letter of order greater than $k$ being a $1$. Writing $\ell+1$ the starting position  of an occurrence in $\pap$ of the factor $w1w$, we set $\ell'=\ell$ if $\ell$ is even or $\ell'=\ell+1$ otherwise. Consider the vectors \[\Delta(\ell',2,2^k), \Delta(\ell'+2,2,2^k), \Delta(\ell'+4,2,2^k), \Delta(\ell'+6,2,2^k), \ldots , \Delta(\ell'+2^{k+1}-2,2,2^k).\]
We claim that these vectors are pairwise distinct. By contradiction, if $\Delta(\ell'+2p,2,2^k)=\Delta(\ell'+2q,2,2^k)$ for some $p,q$ with $p \leq q$, then we  have that 
\begin{equation}\label{eq:3}
\Phi(p_{\ell'+2p+1}\cdots p_{\ell'+2p+2^{k+1}})=\Phi(p_{\ell'+2q+1}\cdots p_{\ell'+2q+2^{k+1}}).
\end{equation}
 Since the factor we are considering is $w1w$, we have $p_{\ell'+2p+1}\cdots p_{\ell'+2q}=p_{\ell'+2p+1+2^{k+1}}\cdots p_{\ell'+2q+2^{k+1}}$ and so 
\[\Phi(p_{\ell'+2q+1}\cdots p_{\ell'+2q+2^{k+1}})=\Phi(p_{\ell'+2q+1}\cdots p_{\ell'+2p+2^{k+1}}p_{\ell'+2p+1}\cdots p_{\ell'+2q})\]
but this and (\ref{eq:3}) contradicts Lemma~\ref{lem:1}. 

Finally, as the vectors are different, we use the Additivity Lemma to obtain a vector whose components are pairwise distinct: applying $n$ times the Additivity Lemma on $\Delta(\ell'+2p,2,2^k)$ one can obtain $n\Delta(\ell'+2p,2,2^k)$. It then suffices to take a  sequence of integers $\alpha_0,\ldots,\alpha_{2^{k}-1}$ increasing enough to have 
\[\Sigma_{i=0}^{2^{k}-1}\alpha_i\Delta(s'+2i,2,2^k), \]
a vector whose components are pairwise distinct. Indeed, labelling $a_j$ the $j$-th component of this vector and $x_{i,j}$ the $j$-th component of $\Delta(s'+2i,2,2^k)$, we have
\[a_j=a_{j'} \Leftrightarrow \Sigma_{i=0}^{2^{k}-1}\alpha_ix_{i,j}=\Sigma_{i=0}^{2^{k}-1}\alpha_ix_{i,j'} \Leftrightarrow \Sigma_{i=0}^{2^{k}-1}\alpha_i(x_{i,j}-x_{i,j'})=0.\]
By ``increasing enough'', we precisely mean $\alpha_r > \Sigma_{i=0}^{r-1}\alpha_i\underset{0\leq q,q' \leq 2^k-1}{\sup}(x_{i,q}-x_{i,q'})$, so that by decreasing induction we have that for every $i$, with $ 0\leq i \leq 2^k-1$, one has $x_{i,j}=x_{i,j'}$.
In particular, this  gives $\Delta(\ell'+2j,2,2^k)=\Delta(\ell'+2j',2,2^k)$, which implies $j=j'$. Hence, all the components are pairwise distinct and the proof is complete.
\end{proof}

\subsection{All paperfolding words}

To generalize the result above to all paperfolding words, one has to take care of the condition  $b_i=b_{i+r}$ in the Additivity Lemma.

Lemma~\ref{lem:1} can be modified so that the translation is not by $2$ but by $2^u$, for any $u> 1$.
Let 
\[\begin{array}{ccccc}
\phi & : & \lbrace0,1\rbrace^{2^u} & \to & \mathbb{N} \\
 & & a_1\cdots a_{2^u} & \mapsto & |\lbrace i \mid a_i=1 \rbrace|
\end{array}\]
be the morphism that identifies words of length $2^u$ over $\{0,1\}$ that are abelian equivalent. Then we have the following lemma, analogous to Lemma~\ref{lem:1}:

\begin{lemma}\label{lem:2}
Let $n \geq u+3$ be an integer and let $\textbf{f}$ be a paperfolding word. Every factor $f=\textbf{f}\left[ \ell+1,\ell+2^n\right]=a_{1,1}a_{1,2}\cdots a_{2^{n-1},2^u-1}a_{2^{n-1},2^u}$ of $\textbf{f}$ of length $2^n$ satisfies the following property:
If $q$ is such that
\begin{align*}
&\phi(f)=\phi(a_{1,1}\cdots a_{1,2^u})\cdots \phi(a_{2^{n-1},1}\cdots a_{2^{n-1},2^u})=\\ 
&\phi(a_{q+1,1}\cdots a_{q+1,2^u})\cdots \phi(a_{2^{n-1},1}\cdots a_{2^{n-1},2^u})\phi(a_{1,1}\cdots a_{1,2^u})\cdots \phi(a_{q,1}\cdots a_{q,2^u}),
\end{align*}
then $q=2^{n-1}$.
\end{lemma}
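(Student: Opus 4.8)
The plan is to mirror the architecture of the proof of Lemma~\ref{lem:1}, with the length-$2$ blocks replaced by length-$2^u$ blocks and $\Phi$ replaced by $\phi$, but to bypass the ad hoc case analysis by arguing directly with $2$-adic valuations of positions. Throughout I will use that the order of a position equals its $2$-adic valuation, that two consecutive letters of a fixed order $k$ lie at distance $2^{k+1}$ and carry opposite values, and that the factor $\textbf{f}[\ell+1,\ell+2^n]$ splits into $N:=2^{n-u}$ consecutive blocks $B_1,\dots,B_N$ of length $2^u$. Since the multiples of $2^u$ are spaced exactly one block apart, each $B_i$ contains \emph{exactly one} position of order $\ge u$, always at the same in-block offset $t^*$; at every other offset the order is some fixed value $k<u$ independent of $i$, and the corresponding letters sit at fixed offsets in every block.

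First I would constrain the shape of a hypothetical nontrivial solution. Exactly as in Lemma~\ref{lem:1}, if $q'$ is the smallest $q$ satisfying the cyclic identity, the commutation argument (two words commute iff they are powers of a common word) gives $q'\mid N$, so $q'=2^{j}$ with $0\le j\le n-u$. If $j=n-u$ there is nothing to prove, so I assume $0\le j\le n-u-1$ and seek a contradiction; the identity then forces $\phi(B_i)=\phi(B_{i+q'})$ for all admissible $i$. The one tool I need is the following shift principle, immediate from the valuation description of $\textbf{f}$: translating a position by $2^m$ preserves the order and the value of every letter of order $\le m-2$, while it preserves the order but \emph{flips} the value of every letter of order exactly $m-1$.

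Second, the even case $j\ge 1$. The block shift is by $2^{u}q'=2^{u+j}$ positions, and since every low order $k\le u-1$ satisfies $k\le (u+j)-2$, the shift principle makes all $2^{u}-1$ low-order letters of $B_i$ and $B_{i+q'}$ agree position by position. As $\phi(B_i)=\phi(B_{i+q'})$ counts the $1$'s and all letters except the single high-order one already coincide, that last letter must coincide too; hence $B_i=B_{i+q'}$ \emph{exactly}. Running this over all $i$ shows the factor has exact period $2^{u+j}$. But $u+j-1\le n-2$, so the factor contains two consecutive letters of order $u+j-1$; these lie at distance $2^{u+j}$ and take opposite values, contradicting the period $2^{u+j}$.

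Finally the odd case $q'=1$, which I expect to be the crux. Now the block shift is by $2^{u}$ positions, so orders $\le u-2$ are preserved, the unique order-$(u-1)$ letter of each block flips from one block to the next, and the unique high-order letter varies. Thus $\phi(B_i)=C+\alpha_i+h_i$, where $C$ (the contribution of the orders $\le u-2$) is constant in $i$, where $\alpha_i\in\{0,1\}$ is the order-$(u-1)$ letter and \emph{strictly alternates}, and where $h_i\in\{0,1\}$ is the high-order letter. Constancy of $\phi(B_i)$ then forces $h_1h_2\cdots h_N$ to be strictly alternating, i.e.\ equal to $0101\cdots$ or $1010\cdots$. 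The key observation is that $(h_i)_i$ is precisely $\textbf{f}$ read along consecutive multiples of $2^u$, which is itself the paperfolding word associated with the shifted instruction sequence $b_ub_{u+1}\cdots$; hence $h_1\cdots h_N$ is a factor of a paperfolding word, and since $N=2^{n-u}\ge 8$ a strictly alternating $h$ would exhibit the factor $0101$ (or $1010$) inside a paperfolding word. This is impossible: among any four consecutive letters the two at odd positions are consecutive order-$0$ letters and so must differ, which rules out both $0101$ and $1010$. This contradiction disposes of $q'=1$, leaving $q'=2^{n-u}=N$ as the only solution and completing the proof. The delicate point throughout is the valuation bookkeeping of the two exceptional offsets $t^*$ and the order-$(u-1)$ offset; once the shift principle is set up, the even case is routine and all the real content is concentrated in the alternation argument of this last paragraph.
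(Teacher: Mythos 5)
Your proof is correct, and it fixes on its own the indexing slip in the statement (the factor of length $2^n$ splits into $2^{n-u}$ blocks of length $2^u$, not $2^{n-1}$; the paper's count is the $u=1$ case of Lemma~\ref{lem:1} carried over verbatim). The skeleton is the same as the paper's: the commutation argument forces the minimal shift $q'$ to be a power of $2$, and the decomposition $\phi(B_i)=C+\alpha_i+h_i$ into a constant low-order contribution, the order-$(u-1)$ letter and the order-$\geq u$ letter is exactly the observation the paper uses to reduce Lemma~\ref{lem:2} to Lemma~\ref{lem:1} (``our new $\phi$ is the previous one modulo the letters of order smaller than $u-1$''). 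Where you genuinely diverge is the crux case $q'=1$: the paper (in Lemma~\ref{lem:1}, to which Lemma~\ref{lem:2} defers) closes it by an explicit case analysis on the Toeplitz structure, comparing blocks two apart and checking by hand that $\Phi(u_iv_i)\neq\Phi(u_{i+2}v_{i+2})$ in every configuration of order-$0$ and order-$1$ letters. You instead deduce from the alternation of $\alpha_i$ that the high-order letters $h_1\cdots h_N$ must strictly alternate, identify $(h_i)$ as a factor of the paperfolding word with instruction sequence $b_ub_{u+1}\cdots$, and kill it with the fact that no paperfolding word contains $0101$ or $1010$ (two consecutive order-$0$ letters differ). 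This buys you a cleaner, alignment-free argument that works uniformly for every instruction sequence and avoids the enumeration of subcases; what it costs is the extra (easy) observation that the subsequence along multiples of $2^u$ is again a paperfolding word, which the paper never needs to state explicitly. Your handling of the even case $q'=2^j$, $j\geq 1$ (position-by-position agreement of low-order letters forces $B_i=B_{i+q'}$, contradicted by a flipped letter of order exactly $u+j-1$) is the paper's argument in valuation language.
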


\begin{proof}
The proof of Lemma~\ref{lem:1} mainly applies here; we only need to change the part where we use the Toeplitz construction to justify $j=n-1$. Here, in each $2^u$-tuple one can find one letter of order $u-1$ and one letter of higher order. Using (\ref{eq:2}), we then see that $\phi(a_{i,1}\cdots a_{i,2^u})$ is totally determined by the letter of order $u-1$ and the letter of higher order in $a_{i,1}\cdots a_{i,2^u}$. Applying again (\ref{eq:2}) to the letter of order $u-1$, we can apply exactly the same reasoning as in the proof of Lemma~\ref{lem:1} (in a sense, our new $\phi$ is the previous one modulo the letters of order smaller than $u-1$). 
\end{proof}

Now, we can prove the main theorem:

\begin{theorem}\label{thm:main}
Every paperfolding word \textbf{f} contains abelian $m$-antipowers for every $m\geq 2$.
\end{theorem}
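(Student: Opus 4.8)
The plan is to run the argument for the regular paperfolding word almost verbatim, but with the morphism $\Phi$ on $2$-blocks replaced by the morphism $\phi$ on $2^u$-blocks and Lemma~\ref{lem:1} replaced by Lemma~\ref{lem:2}. Fix $m \ge 2$ and choose $k \ge 2$ with $2^k \ge m$; since an abelian $2^k$-antipower has an abelian $m$-antipower as a prefix, it suffices to exhibit one vector $\Delta(S,D,2^k)$ whose $2^k$ components are pairwise distinct. As in the regular case I would first locate, at some even position $\ell'$, a factor of $\textbf{f}$ whose two halves of length $2^{k+u}$ are separated by a single $1$ of high order and become equal after applying $\phi$ block by block — the $2^u$-scale analogue of the factor $w1w$, obtained by the order-raising translations of (\ref{eq:2}). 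Setting the gap $d = 2^u$, the base vectors $\Delta(\ell'+2^u p, 2^u, 2^k)$ for $p = 0, \dots, 2^k-1$ then have the Hankel form $x_{p,j} = g(p+j)$ (the $\phi$-value of the block at offset $p+j$, up to the constant that accounts for the non-extra $1$s), so Lemma~\ref{lem:2}, which forbids any nontrivial cyclic shift from fixing this $\phi$-sequence, makes them pairwise distinct and, more importantly, coordinate-separating. The same fast-growing coefficients $\alpha_0, \dots, \alpha_{2^k-1}$ as before then turn $\sum_p \alpha_p \Delta(\ell'+2^u p, 2^u, 2^k)$ into a vector with pairwise distinct components, and repeated use of the Additivity Lemma realises this weighted sum as a genuine $\Delta(S,D,2^k)$ in $\textbf{f}$.

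The one genuinely new point, and the step I expect to be the main obstacle, is the hypothesis $b_{k'} = b_{k'+r}$ that the Additivity Lemma (Lemma~\ref{al}) attaches to every one of the many additions used to build the weighted sum. Each addition has one of the base vectors $v = \Delta(\ell'+2^u p, 2^u, 2^k)$ as its second argument and introduces a scale $r$, and I must check $b_{k'}=b_{k'+r}$ for every order $k'$ at which $v$ is $b$-sensitive, i.e.\ at which $\mathcal{E}_{k',1}(\ell',2^u,2^k) \neq \mathcal{E}_{k',-1}(\ell',2^u,2^k)$. Here the choice of block size pays off: for $k' \le u-2$ we have $2^{k'+2} \mid 2^u$, so every length-$2^u$ block contains the same number of order-$k'$ ones for both phases and $v$ is insensitive; for $k' \ge u+k$ the order-$k'$ ones are spaced farther than twice the whole span $2^{u+k}$ and can be kept out of the factor altogether by the choice of the even position $\ell'$. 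Thus the sensitive orders all lie in the bounded window $W = \{u-1, \dots, u+k-1\}$, whose length $k+1$ does not depend on $u$.

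It remains to choose the scales so that $\textbf{\textit{b}}$ agrees with its shift by $r$ on $W$. Since $\textbf{\textit{b}}$ is an infinite word over the two-letter alphabet $\{-1,1\}$, among its factors of length $k+1$ some pattern $\pi$ occurs infinitely often, say at positions $i_1 < i_2 < \cdots$; I would then fix $u := i_1+1$, so that $\textbf{\textit{b}}$ restricted to $W = \{u-1,\dots,u+k-1\} = \{i_1,\dots,i_1+k\}$ equals $\pi$, and draw the scales of the successive Additivity-Lemma applications from the set $\{\,i_j - i_1 : j \ge 2\,\}$. For each such $r = i_j - i_1$ one has $b_{k'} = b_{k'+r}$ throughout $W$ by the choice of $\pi$, and since these $r$ are unbounded we can always take one large enough to meet the remaining hypothesis $2^r > \ell + md$ on the current partial sum. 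This simultaneously secures the Additivity Lemma and leaves the distinctness argument of the first paragraph untouched, completing the proof. The delicate part is precisely this interplay: fixing $u$ to catch a recurring window of $\textbf{\textit{b}}$ while still having enough admissible scales $r$, and checking that the high-order stragglers really can be neutralised by $\ell'$, is where the bulk of the care is needed.
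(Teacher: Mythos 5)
Your overall architecture is the paper's: the same $2^u$-block morphism $\phi$ and Lemma~\ref{lem:2}, the same $w1w$ factor centred on a $1$ of high order, the same Hankel/coordinate-separating use of the base vectors $\Delta(\ell'+2^up,2^u,2^k)$, the same fast-growing weights realised through iterated applications of the Additivity Lemma, and the same pigeonhole on a recurring window of $\textbf{\textit{b}}$ to supply unboundedly many admissible scales $r$. The one place where your argument breaks is exactly the step you flagged as delicate: the claim that the $b$-sensitive orders are confined to $W=\{u-1,\ldots,u+k-1\}$ because letters of order $k'\geq u+k$ ``can be kept out of the factor altogether by the choice of the even position $\ell'$.'' This is false, for two reasons. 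First, sensitivity of $\mathcal{E}_{k',b}$ does not require two $1$'s of order $k'$ in the span, only one \emph{position} of order $k'$ (flipping $b_{k'}$ flips the letter there, hence the count), and these positions are spaced $2^{k'+1}$, not $2^{k'+2}$. Second, $\ell'$ is not a free parameter: the factor is centred on a $1$ of order $u+k$, so every occurrence starts at a position congruent to $2^{u+k}$ modulo $2^{u+k+1}$ (up to the parity tweak), and consequently the union of the spans of your base vectors, of length about $2^{u+k+1}$, unavoidably straddles a multiple of $2^{u+k+1}$ --- a position of order strictly greater than $u+k$, which after the two order-raising translations of (\ref{eq:2}) can have order as large as $u+k+4$. (If you include the base vectors with $2^k\leq p<2^{k+1}$, the central $1$ of order $u+k$ itself lies in half of the spans, so order $u+k$ is sensitive as well.)

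The repair is small and brings you exactly to the paper's proof: the sensitive window is $\{u-1,\ldots,u+k+4\}$, still of bounded length $k+6$ independent of $u$, so you should apply the pigeonhole to factors $b_{u-1}\cdots b_{u+k+4}$ of $\textbf{\textit{b}}$ rather than to factors of length $k+1$; everything else in your argument (the choice of $u$ from the first occurrence of the recurring pattern, the scales $r=i_j-i_1$, the lower bound $2^r>\ell+md$ met by taking $j$ large) then goes through unchanged. The rest of your outline --- insensitivity for $k'\leq u-2$ because $2^{k'+2}\mid 2^u$, the distinctness of the base vectors via Lemma~\ref{lem:2} applied to the cyclic-shift identity coming from $w=w'$, and the weighted-sum trick --- is correct and coincides with the paper.
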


\begin{proof}
Let $k$ be an integer such that $2^k \geq m$. As before, we will prove that \textbf{f} contains abelian $2^k$-antipowers, hence it will contain abelian $m$-antipowers.
Since the alphabet $\lbrace 0,1\rbrace$ is finite, there must exist a factor $b_{u-1}\cdots b_{u+k+4}$ of \textbf{\textit{b}} that occurs infinitely often. As before, let us start with the first block of length $2^{u+k+2}-1$ containing a $1$ of order $u+k$ in the middle; our block is then 
\[ w1w'
\]
with $|w|=|w'|=2^{u+k+1}-1$.
As before, in at most two steps, we can have $w=w'$, and the maximum order of a letter appearing in this factor is $u+k+4$. Again, writing $\ell$ the starting position of an occurrence of this factor, we set $\ell'=\ell$ if $\ell$ is even or $\ell'=\ell+1$ otherwise. Consider the vectors 
\[\Delta(\ell',2^u,2^k), \Delta(\ell'+2^u,2^u,2^k), \Delta(\ell'+2^{u+1},2^u,2^k), \ldots ,\Delta(\ell'+2^{u+k+1}-2^u,2^u,2^k).\] 
Here again, these vectors are pairwise distinct: if $\Delta(\ell'+2^up,2^u,2^k)=\Delta(\ell'+2^uq,2^u,2^k)$, we  have that  
\[\phi(p_{\ell'+2^up+1}\cdots p_{\ell'+2^u(p+2^{k})})=\phi(p_{\ell'+2^uq+1}\cdots p_{\ell'+2^u(q+2^{k})}) \] 
and this contradicts Lemma~\ref{lem:2} because, here again, $w=w'$ and so
\[p_{\ell'+2^up+1}\cdots p_{\ell'+2^uq}=p_{\ell'+2^u(p+2^{k})+1}\cdots p_{\ell'+2^u(q+2^{k})}.\]

Moreover, $\varepsilon_{i,-1}( \ell'+2^up,2^u,2^k) \neq \varepsilon_{i,1}( \ell'+2^up,2^u,2^k) \Rightarrow u-1 \leq i \leq u+k+4$, using (\ref{eq:2}) and the fact that no letter of order higher than $u+k+4$ appears in the factor $w1w$. So, choosing $r$ such that $2^r > \ell'+2^{u+k+1}-2^u + 2^{u+k}$ and $b_{u-1}\cdots b_{u+k+4}=b_{r+u-1}\cdots b_{r+u+k+4}$, we can apply the Additivity Lemma and, as for the regular paperfolding word, construct an abelian $2^k$-antipower that occurs as a factor in \textbf{f}.
\end{proof}

\begin{remark}\label{rem:pap}
 From Theorem~\ref{thm:main} it follows immediately that every paperfolding word has unbounded abelian complexity.
\end{remark}

In~\cite{CRSZ2010} Cassaigne et al.~proved that every infinite word $w$ with bounded abelian complexity $a_w(n)$ contains abelian powers of every order. In fact, one can see that the following hypothesis on $w$ is sufficient:
\begin{equation}\label{eq:C}
 \exists N, \forall m, \exists v \in \text{Fact}(w), |v|=m  \text{ and } a_v(n) \leq N,
\end{equation}
that is, the abelian complexity is bounded on arbitrarily long factors of $w$.
Since every paperfolding word is uniformly recurrent, by Remark \ref{rem:pap} we have that (\ref{eq:C}) cannot hold true for paperfolding words. Hence, (\ref{eq:C}) is not a necessary condition to have abelian powers of every order.


\end{document}